\def\N{\mathbb N}
\def\Z{\mathbb Z}
\def\R{\mathbb R}
\def\C{\mathbb C}
\def\A{\mathcal{A}}
\newcommand{\be}{\begin{eqnarray}}
\newcommand{\ee}{\end{eqnarray}}
\newcommand{\Tk}{{\mathcal T}}
\newcommand{\supp}{\mbox{\rm supp}}
\newcommand{\mmax}{{\rm max}}
\newcommand{\sgn}{{\rm sign}}
\newcommand{\Dk}{{\mathcal D}}
\newcommand{\Ak}{{\mathcal A}}
\newtheorem{theorem}{Theorem}[section]
\newtheorem{prop}[theorem]{Proposition}
\newtheorem{lem}[theorem]{Lemma}
\numberwithin{equation}{section}
\begin{document}
\title{Determining pure discrete spectrum for some self-affine tilings }


\date\today

\maketitle

\centerline{Shigeki Akiyama$^{\,\rm a}$, Franz G\"ahler$^{\,\rm b}$ and Jeong-Yup Lee$^{\,\rm c}$\footnote{Corresponding author. }}

\begin{abstract}
  By the algorithm implemented in Akiyama-Lee \cite{Akiyama-Lee:10}
  and some of its predecessors, we have examined the pure discreteness
  of the spectrum for all
  irreducible Pisot substitutions of trace less than
  or equal to $2$, and 
  some cases of planar tilings generated by boundary substitutions 
  due to Kenyon\cite{Kenyon:96}.
\end{abstract}

\section{Introduction}

Self-affine tilings are often studied as examples of tiling dynamics. 
Many equivalent conditions are known for the spectrum of the tiling dynamics 
to be pure discrete \cite{Pisot-Chapter}, but none of them is known to hold in 
general. For particular instances of tilings, there are algorithms by which 
one can check whether the spectrum is pure discrete. The overlap algorithm \cite{SS} 
and the balanced pair algorithm \cite{SS} are practically usable mostly in one 
dimension, but the potential overlap algorithm of Akiyama and Lee \cite{Akiyama-Lee:10} 
is of practical use in all dimensions, even if the tiles have complicated geometries.
Here we use these algorithms to check the pure discreteness of the spectrum of the 
tiling dynamics for special cases of self-affine tilings.
One of these cases is the 1-dimensional irreducible Pisot substitution tilings. 
There is a long-standing conjecture \cite{Pisot-Chapter} that these tilings have 
pure discrete spectrum. For the cases with two tiles, it is known that the 
conjecture is true \cite{HS03}, but not much is known for more than two tiles. 
Already the case with three tiles is computationally involved. We concentrate
here on the cases with substitution matrices of small trace. More precisely,
all inequivalent substitution matrices $M$ with $Tr(M)\le2$ have been generated,
and for all substitutions with these matrices (446683 substitutions in total), 
we have checked that the spectrum is pure discrete. 


The other type of tilings we consider are the self-affine tilings 
constructed from the endomorphisms of free groups by Kenyon
\cite{Kenyon:96}. We have looked at the cubic 
polynomials whose
coefficients are all less than or equal to $3$, 
except for a single case, whose
computation is beyond our computer capability. All other examples turn
out to be pure discrete.

We also give some non pure discrete
examples of self-affine tilings in \S \ref{Non}. 
The construction of the first one is due to Bandt \cite{Bandt:97}.
The second arises from $4$ interval
exchanges studied by Arnoux-Ito-Furukado.
Both satisfy the 
Pisot family condition, so that their translation
actions are not weakly mixing.
We provide the programs in \cite{AL-web,FGweb}.

\section{Pisot substitutions with small trace}

In this section, we wish to computationally confirm the Pisot substitution conjecture to be true for a class of simple, irreducible Pisot substitutions with three tiles. 
Consider a monoid $\A^*$ over finite alphabets $\A$ equipped with concatenation
and write the identity as $\epsilon$, the empty word. 
A {\it symbolic substitution} $\sigma$
is a non-erasing homomorphism of $\A^*$, defined by  
$\sigma(a)\in \A^+=\A^* \setminus$ \{$\epsilon$\} for $a\in \A$. 
The set $\A^{\Z}$ of two sided sequences is compact by the product topology of the discrete topology on $\A$. 
The substitution $\sigma$ acts naturally on $\A^{\Z}$ by $\sigma(\dots a_{-1}a_0a_1a_2\dots)=
\dots \sigma(a_{-1})\sigma(a_0)\sigma(a_1)\sigma(a_2)\dots$.
Let $M_{\sigma}$ be the {\it incidence matrix} $(|\sigma(j)|_i)_{ij}$ where $i,j\in \A$. Here
$|w|_j$ is the cardinality of $j$ appearing in a word $w\in \A^*$.
Denote by $\chi_{\sigma}$ the characteristic polynomial of $M_{\sigma}$.
The substitution $\sigma$ is {\it primitive} if $M_{\sigma}$ is primitive and
it is {\it irreducible} if $\chi_{\sigma}$ is 
irreducible\footnote{We always assume the irreducibility of $M_{\sigma}$ in the sense of Perron-Frobenius theory. 
So the irreducibility in this article is for $\chi_{\sigma}$.
}. 
{\it A Pisot number} is an algebraic integer $\lambda > 1$ whose all the other algebraic conjugates of $\lambda$ lie strictly inside the unit circle.
If the Perron-Frobenius root of $M_{\sigma}$
is a Pisot number then we say that $\sigma$ is a Pisot substitution. 
A word $w\in \A^*$ is {\it admissible} if there exist $k\in \N$ and $a\in \A$
such that $w$ is a subword of $\sigma^k(a)$.
Let 
$$
X_{\sigma}=
\{ (a_n)_{n\in \Z}\in \A^{\Z}\ |\ a_{k}a_{k+1}\dots a_{\ell} 
\text{ is admissible for all } k, \ell \text{ with } k<\ell \}.
$$
Then $(X_{\sigma}, s)$ forms a topological dynamical system
where $s$ is the shift map 
defined by $s( (a_n)_{n\in \Z})=(a_{n+1})_{n\in \Z}$. 
By the primitivity, the system is minimal and uniquely ergodic with the
unique invariant measure $\mu$. Therefore we can discuss the spectrum 
of the unitary operator $U_{\sigma}$ acting on $L^2(X_{\sigma}, \mu)$ for which
$(U_{\sigma}(f))(x)=f(s(x))$.
The substitution $\sigma$ has {\em pure discrete dynamical spectrum} if 
the spectral measure associated to $U_{\sigma}$ consists only of point spectra,
 or equivalently, the linear span of eigenfunctions is dense in $ L^2(X_{\sigma},\mu)$.
It is conjectured \cite{Pisot-Chapter} that this $\Z$-action by $U_{\sigma}$ is pure discrete if 
$\sigma$ is an irreducible Pisot substitution -- so called {\it Pisot substitution conjecture}. 
For the primitive substitution $\sigma$, 
we can also discuss a natural suspension of $(X_{\sigma},s)$ by
associating to each letter the length determined by the associated 
entries of the left eigenvector of $M_{\sigma}$. The sequence
then defines a 
tiling of $\R$ with an inflation matrix $Q=(\beta)$, 
where $\beta$ is the Perron-Frobenius root of $M_{\sigma}$.
This gives a tiling dynamical system $(X_{\mathcal{T}},\R)$ which is
also minimal and uniquely ergodic. It is known \cite{Clark-Sadun:03} that 
if $\sigma$ is an irreducible Pisot substitution, 
this $\R$-action on $(X_{\mathcal{T}},\R)$ is
pure discrete if and only if the $\Z$-action on $(X_{\sigma},s)$ is
pure discrete. 

\medskip

The following assertion may be known but we did not find it in the
literature. It gives a bound for the number of irreducible primitive
substitutions over $m$ letters.

\begin{lem}
\label{Bound}
Let $B > 0$. The cardinality of the set of
primitive substitutions over $m$ letters, 
whose Perron Frobenius root is less than or equal to $B$, is less than 
$m^{m^4 B^{2(m-1)^2+2}}$.
\end{lem}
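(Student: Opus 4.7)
\medskip

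\noindent\textbf{Proof proposal.} The plan is to reduce the problem to bounding the entries of the incidence matrix $M = M_\sigma$, for the following reason: a substitution with a prescribed incidence matrix is determined by choosing, independently in each column $j$, a word of length $L_j := |\sigma(a_j)| = \sum_i M_{ij}$ with the prescribed letter multiplicities, and the number of such words is at most $m^{L_j}$. Hence the number of substitutions realising a fixed $M$ is at most $\prod_j m^{L_j} = m^{\sum_{ij} M_{ij}}$, and it suffices to produce a uniform bound on the entries of $M$ in terms of $m$ and $B$.

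\medskip

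\noindent The core estimate will come from combining two classical inputs. First, since $M$ is non-negative, its spectral radius equals the Perron--Frobenius root $\lambda \le B$, so every eigenvalue $\mu$ satisfies $|\mu| \le B$ and therefore
$$
\mathrm{tr}(M^n) \;=\; \sum_{i=1}^m \mu_i^n \;\le\; m B^n, \qquad n \ge 1.
$$
Second, by Wielandt's exponent bound, $M^k$ is strictly positive (hence every entry is a positive integer, thus $\ge 1$) for $k = (m-1)^2 + 1$. Applied to the $ij$ and $ji$ entries of $M^k$ these give
$$
(M^k)_{ij}\,(M^k)_{ji} \;\le\; (M^{2k})_{ii} \;\le\; \mathrm{tr}(M^{2k}) \;\le\; m\,B^{2k},
$$
whence each entry of $M^k$ is at most $m B^{2(m-1)^2 + 2}$. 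Passing from $M^k$ back to $M$ is done by noting that for each $j$ one has $(M^{k-1})_{jj} \ge 1$ (directly from Wielandt, taking $k$ one larger if necessary, or from the numerical-semigroup/Frobenius argument applied to the cycle lengths through $j$, whose gcd is $1$ by primitivity), so that the single-term lower bound $(M^k)_{ij} \ge M_{ij}(M^{k-1})_{jj}$ yields $M_{ij} \le m B^{2(m-1)^2 + 2}$.

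\medskip

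\noindent Once the entry bound is in hand, the count is mechanical. The number of admissible matrices $M$ is at most $\bigl(m B^{2(m-1)^2 + 2} + 1\bigr)^{m^2}$, each column sum is at most $m^2 B^{2(m-1)^2 + 2}$, and the number of substitutions per matrix is therefore at most $m^{m^3 B^{2(m-1)^2 + 2}}$. Multiplying, one absorbs the $(\cdot)^{m^2}$ polynomial prefactor into the dominant exponential (which is valid even at $B=1$ provided $m \ge 2$), yielding the stated bound $m^{m^4 B^{2(m-1)^2 + 2}}$.

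\medskip

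\noindent The main obstacle, and essentially the only place where one has to be careful, is the transition from a bound on the entries of $M^k$ to a bound on the entries of $M$: Wielandt guarantees strict positivity of $M^k$ but not of $M^{k-1}$, so one must either invoke primitivity through a short-cycle argument at each vertex, or inflate $k$ by a small additive constant (which merely shifts the exponent of $B$ slightly and is absorbed into the generous stated form $2(m-1)^2 + 2$). Everything else is trace bookkeeping and a crude multinomial count.
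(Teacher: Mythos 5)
Your overall strategy is the same as the paper's: use Wielandt's bound $k=(m-1)^2+1$ on the exponent of primitivity, bound the entries of $M^k$ by exploiting that the product $(M^k)_{ij}(M^k)_{ji}$ is controlled while $(M^k)_{ji}$ is a positive integer, transfer the bound back to $M$, and finish with a multinomial count of the words in each column. Your one genuine simplification is the estimate $(M^k)_{ij}(M^k)_{ji}\le (M^{2k})_{ii}\le \mathrm{tr}(M^{2k})\le mB^{2k}$ via power sums of eigenvalues; the paper instead extracts the same information from the coefficients $c_{m-1}$ and $c_{m-2}$ of the characteristic polynomial of $M^k$, which costs an extra computation (bounding $\sum_{i<j}a_{ii}a_{jj}$) and a slightly worse constant. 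That part of your argument is clean and correct, as is the final counting.

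The gap is in the transfer step, exactly where you flagged it, and neither of your two proposed repairs works as stated. The inequality $(M^{k-1})_{jj}\ge 1$ is false in general: already for $m=2$ the primitive matrix $M=\begin{pmatrix}0&1\\1&1\end{pmatrix}$ has $k=2$ and $(M^{k-1})_{11}=M_{11}=0$. More generally, for the Wielandt digraph ($m$-cycle plus one chord creating an $(m-1)$-cycle), closed walks through a vertex lying only on the $m$-cycle have length $am+b(m-1)$ with $a\ge 1$, which is $\equiv a\not\equiv 0 \pmod{m-1}$ unless $a\ge m-1$, forcing length at least $m(m-1)>(m-1)^2$; so $(M^{(m-1)^2})_{jj}=0$ and the numerical-semigroup fix also fails at the exact value $k-1=(m-1)^2$. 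Enlarging $k$ by one is not free either: the exponent in the final bound is exactly $2k=2(m-1)^2+2$, so the stated form has no slack to absorb a shift to $2(m-1)^2+4$. The correct repair keeps the exponent: a primitive matrix has no zero rows, hence neither does $M^{k-1}$, so for each $j$ there is some $j'$ with $(M^{k-1})_{jj'}\ge 1$, and then $M_{ij}\le M_{ij}(M^{k-1})_{jj'}\le (M^{k})_{ij'}\le mB^{2k}$. (The paper elides this point with the one-line assertion that an oversized entry of $M$ would force an oversized entry of $M^{k}$; the row-nonvanishing argument is what makes that assertion true.)
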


\begin{proof}
Let $\sigma$ be a substitution over $m$ letters whose incidence 
matrix is $M_{\sigma}$. Then there is a positive integer $k$ that
$M_{\sigma}^k=M_{\sigma^k}$ is a positive matrix. In fact, one can take 
$k \le (m-1)^2+1$ for all primitive matrix $M_{\sigma}$ 
(see \cite{HolladayVarga:58}, \cite{Ptak:58}).
Denote the characteristic polynomial by $\Phi_{\sigma^k}(x)=x^m-c_{m-1}x^{m-1}-c_{m-2}x^{m-2}
-\dots -c_0$. Then we have a bound
$|c_{m-i}|< {m \choose i} \beta^{ki}$, because other roots of $\Phi_{\sigma^k}$ are
less than $\beta^k$ in modulus. Our aim is to show that 
there are only finitely many matrices $M_{\sigma}^k=(a_{ij})$. Indeed this implies
that all entries of $M_{\sigma}$ are bounded by $\max \{ a_{ij} \ | \ i, j \le m \}$, since
otherwise there is an entry of $M_{\sigma^k}$ larger than this bound.
From 
$0\le \sum_i a_{ii}= c_{m-1}\le m \beta^k$, we have $a_{ii}< m \beta^k$
and it suffices to show that $a_{ij}$ is bounded for $i\neq j$.
Using 
$$
{m \choose 2}\beta^{2k}> c_{m-2}= \sum_{i<j} a_{ij}a_{ji} - \sum_{i<j} a_{ii}a_{jj}
$$
and
$$
0<\sum_{i<j} a_{ii}a_{jj} = \frac 12 \left(
\left(\sum_{i} a_{ii} \right)^2
- \sum_{i} a_{ii}^2 \right) \le \frac {m^2}2 \beta^{2k} 
$$
we see that
$$
0\le a_{ij}a_{ji}< \sum_{i<j} a_{ij}a_{ji} \le {m \choose 2} \beta^{2k} + \frac{m^2}2 \beta^{2k}
\le m^2 \beta^{2k}
$$
Since $a_{ji} \in \N$, we have the bound $a_{ij}\le  m^2 B^{2k}$. 
Thus we have $b_{ij} \le m^2 B^{2k}$ where $M_{\sigma}=(b_{ij})$ and the 
number of possible $\sigma(i)$'s for each $i$ is bounded by the multinomial coefficient:
\begin{eqnarray}
{m^3 B^{2k} \choose m^2 B^{2k}, m^2 B^{2k}, \dots, m^2 B^{2k}} 
\le m^{m^3 B^{2k}}
\le m^{m^3 B^{2(m-1)^2+2}}
\end{eqnarray}
which gives the required bound.
\end{proof}

Although Lemma \ref{Bound} gives a bound for the number of substitutions whose Perron-Frobenius root is less than $B$, it is too large to 
be useful in practice.
In the sequel, we deduce a practical estimate by 
the property of the Pisot number to narrow the range of
computation.
If $\sigma$ is a Pisot substitution of degree $d$ whose incidence matrix has a Pisot number 
$\beta$ as the Perron-Frobenius root,
we have $\beta-(d-1)< {\rm Tr} (M_{\sigma}) < \beta +(d-1)$.
Thus it is meaningful to check the Pisot conjecture for irreducible substitutions
whose incidence matrix $M_{\sigma}$ has small trace with a fixed degree. 
Our first result is the following proposition.

\begin{prop}
Let $\sigma$ be a primitive, irreducible, cubic Pisot substitution with 
${\rm Tr}(M_{\sigma}) \le 2$. Then the spectrum of $\sigma$ is pure discrete.
\end{prop}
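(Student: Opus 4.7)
The plan is to reduce the statement to a finite enumeration that can be dispatched by the potential overlap algorithm of Akiyama--Lee~\cite{Akiyama-Lee:10}. First, I would exploit the hypotheses to bound the Perron--Frobenius root $\beta$ of $M_\sigma$. For a cubic Pisot substitution, the inequality $\beta - (d-1) < {\rm Tr}(M_\sigma) < \beta + (d-1)$ with $d=3$ combined with ${\rm Tr}(M_\sigma)\le 2$ forces $\beta < 4$. Since the two Galois conjugates of $\beta$ are complex and lie in the open unit disc, the coefficients of $\chi_\sigma(x)=x^3-c_2 x^2-c_1 x -c_0$ are controlled (e.g.\ $|c_2|\le 2$, $|c_1|<1+2\beta$, $|c_0|<\beta$), and the entry estimates from the proof of Lemma~\ref{Bound} then restrict $M_\sigma$ to an explicit finite list of primitive non-negative integer $3\times 3$ matrices with irreducible characteristic polynomial.

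Next, for each admissible incidence matrix $M$, I would enumerate all symbolic substitutions $\sigma$ over three letters whose incidence matrix is $M$; this amounts to choosing, for each letter $a$, an ordering of the $|\sigma(a)|$ symbols prescribed by the corresponding column of $M$. After discarding substitutions that are conjugate under a permutation of the alphabet and those whose characteristic polynomial is reducible, this yields a finite list of candidates (the $446683$ substitutions mentioned in the introduction).

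For each candidate $\sigma$, I would apply the algorithm of \cite{Akiyama-Lee:10} to the associated self-affine tiling of $\R$ obtained by suspending $(X_\sigma,s)$ with lengths given by the left Perron eigenvector of $M_\sigma$. The algorithm terminates with a positive answer precisely when the $\R$-action on $(X_{\mathcal{T}},\R)$ has pure discrete spectrum, which by \cite{Clark-Sadun:03} is equivalent to pure discreteness of the $\Z$-action $U_\sigma$ on $L^2(X_\sigma,\mu)$. A positive answer in every single case proves the proposition.

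The main obstacle is not conceptual but computational. The size of the overlap graph generated by the algorithm grows quickly when $\beta$ approaches $4$, when the tile lengths are strongly unequal, or when the substitution words are long (several hundred letters occur near the trace-$2$ boundary). Efficient implementation, careful pruning via symmetries (letter relabelling, reversal of $\sigma(a)$), and allocation of sufficient memory for the sparsely represented overlap sets are the bottlenecks one must handle to push the verification through for the full catalogue; the code used is available at \cite{AL-web,FGweb}.
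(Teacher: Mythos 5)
Your overall strategy --- reduce to a finite list of substitutions and dispatch each one by the (potential) overlap algorithm, transferring between the $\Z$-action and its suspension via \cite{Clark-Sadun:03} --- is exactly the paper's. But there is a genuine gap at the one place where the paper does nontrivial mathematical work: the passage from coefficient bounds on $\chi_\sigma$ to entry bounds on $M_\sigma$. You invoke ``the entry estimates from the proof of Lemma~\ref{Bound}''; for $m=3$ letters and $\beta<4$ those give only $a_{ij}\le m^2\beta^{2k}$ for the entries of the positive power $M_\sigma^k$, with $k$ as large as $(m-1)^2+1=5$, i.e.\ entries bounded by roughly $9\cdot 4^{10}\approx 10^{7}$. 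The resulting ``explicit finite list'' has on the order of $10^{60}$ candidate matrices and cannot be enumerated, let alone run through the overlap algorithm; the paper itself says of Lemma~\ref{Bound} that its bound ``is too large to be useful in practice.'' Since the proof of this proposition \emph{is} the computation, reducing to an infeasible finite problem does not prove it. What the paper actually does is derive the much sharper estimate (\ref{Estimate}), giving $a,b,c,d,e,f\le r+p(q+p^2)$, by writing $q$ and $r$ explicitly as polynomials in the nine entries of $M_\sigma$, using primitivity to guarantee that none of the vectors $(a,b),(c,d),(e,f),(c,e),(a,f),(b,d)$ vanishes, and then bounding each off-diagonal entry linearly in $q$ and $r$; combined with the Akiyama--Gjini characterization (\ref{Pisot}), which pins down the admissible triples $(p,q,r)$ for each trace $p\in\{0,1,2\}$, this yields the manageable counts reported in Table~1. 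You would need to supply this (or an equally sharp) entry bound for your argument to be executable.

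A secondary slip: you assert that the two Galois conjugates of $\beta$ are complex. A cubic Pisot number can be totally real; your coefficient bounds survive (they use only that the conjugates have modulus less than $1$), but the claim as stated is false and should not be used elsewhere in the enumeration.
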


\begin{proof}
Note that ${\rm Tr}(M_{\sigma})\ge 0$ since $M_{\sigma}$ is a non-negative matrix.
Let 
\begin{eqnarray} \label{Char-Polynomial}
x^3-px^2-q x-r, \ \ \ \mbox{where} \ p, q, r \in \Z
\end{eqnarray}
be the characteristic polynomial of $\sigma$. It 
is the minimal polynomial of a cubic Pisot number if and only if
\begin{equation}
\label{Pisot}
\mmax \{2-p-r,\ r^2-\sgn(r)(1+pr)+1\} \le q \le p+r \text{ and } r\neq 0
\end{equation}
holds (see \cite{Akiyama-Gjini:04}). 
We see $p = {\rm Tr}(M_{\sigma}) \ge 0$.
Let
$$
M_{\sigma}:=
\begin{pmatrix}
k_1& a & b\\
c& k_2 & d\\
e& f & k_3\\
\end{pmatrix} 
$$
be the incidence matrix of the primitive substitution $\sigma$.
Then the coefficients $p, q, r$ of the 
characteristic polynomial (\ref{Char-Polynomial}) can be written as
\begin{eqnarray*}
p&=&k_1+k_2+k_3 \ \ \mbox{and} \ \ p\ge 0, \\ 
q&=&ac+be+df-k_1k_2-k_2k_3-k_1k_3, \\ 
r&=&k_1k_2k_3+dcf+ade-dfk_1-bek_2-ack_3.
\end{eqnarray*}
We claim that $a,b,c,d,e,f$ are not greater than $L$, where
\begin{equation}
\label{Estimate}
L := \max\{q+k_1k_2+k_2k_3+k_3k_1,\ 
r-k_1k_2k_3 + \max\{k_1,k_2,k_3\} (q+k_1k_2+k_2k_3+k_1k_3) \}
\end{equation}
using the idea of Lemma \ref{Bound}.
Note that by the primitivity, none of six vectors 
$(a,b)$, $(c,d)$, $(e,f)$, $(c,e)$, $(a,f)$, $(b,d)$ is $(0,0)$.
By symmetry, we only prove this bound (\ref{Estimate}) for $a$. 
Consider $q$ and $r$ as a linear polynomial on $a$. Then the leading 
coefficients are $c$ and $de-ck_3$ and we see 
that either $c\neq 0$ or $de-ck_3\neq 0$ holds. 
If $c\neq 0$, then the formula
for $q$ gives
$a\le ac \le q+k_1k_2+k_2k_3+k_1k_3$. If $c=0$ then, the formula for $q$
implies $be+df\le q+k_1k_2+k_2k_3+k_1k_3$ and the formula for $r$ gives
$$a\le ade \le r-k_1k_2k_3 + df k_1+ be k_2 \le 
r-k_1k_2k_3 + \max\{k_1,k_2\} (q+k_1k_2+k_2k_3+k_1k_3).$$
So the claim follows.
Since $k_1,k_2,k_3$ are non-negative integers, from (\ref{Estimate}) one
can also deduce $a,b,c,d,e,f \le r+p(q+p^2)$.

In the case $p=0$ we easily have $q=r=1$ from (\ref{Pisot}). By using 
the bound of $a,b,c,d,e,f$, there are
$6$ substitutions matrices with the characteristic polynomial $x^3-x-1$.  
However, these matrices form a single orbit under the group $S_3$ which 
permutes the symbols, so that only one matrix with 2 substitutions
needs to be checked. Moreover, there is a further symmetry which
can be taken into account in order to avoid duplicate work.
Let $\xi:\A^* \rightarrow \A^*$ be the `mirror' map, i.e.,
$\xi(a_1a_2\dots a_{n-1}a_n)=
a_na_{n-1}\dots a_2a_1$. For a given substitution $\sigma$, we define
$\overline{\sigma}$ by $\overline{\sigma}(a)=\xi(\sigma(a))$ for $a\in \A$.
Then the dynamical systems $X_{\sigma}$ and $X_{\overline{\sigma}}$ are
clearly isomorphic and we only have to compute one of them. 
By using this symmetry, the two substitutions of the $p=0$ case reduce to one,
which turns out to be pure discrete. 
Next we study the case $p>0$.

Changing the order of letters, for $p=1$ we may assume
that the incidence matrix $M_{\sigma}$ is of the form:
$$
\begin{pmatrix}
1& a & b\\
c& 0 & d\\
e& f & 0\\
\end{pmatrix} \text{ with } a\ge b
$$
and for $p=2$ we have
$$
\begin{pmatrix}
2& a & b\\
c& 0 & d\\
e& f & 0\\
\end{pmatrix} \text{ with } a\ge b, \text{ or }
\begin{pmatrix}
1& a & b\\
c& 1 & d\\
e& f & 0\\
\end{pmatrix} \text{ with } a\ge c.
$$
For each case we can deduce bounds for $a,b,c,d,e,f$ 
from the inequality (\ref{Estimate}). The list of matrices
satisfying these bounds is finite, but still contains some 
pairs which are equivalent under a permutation of the symbols. 
Only one matrix of each such pair is retained. For any substitution 
matrix $M$ in the list, there are only finitely many substitutions
$\sigma$ with $M_{\sigma}=M$ (of which we keep only one per mirror 
pair), so that we obtain a finite list of irreducible Pisot 
substitutions to be checked. 
Then we use the `potential overlap algorithm' of \cite{Akiyama-Lee:10} 
and a new implementation \cite{FGweb} of the classical overlap algorithm 
\cite{SS, soltil}
to check all the substitutions in the list. Moreover, since the 
substitutions in the list are irreducible (Prop. 1.2.8 of \cite{Fogg}), 
we may check by the balanced pair algorithm \cite{SS} as well.
Up to symbol relabelling and mirror symmetry as above, there are 
7377 irreducible cubic Pisot unit substitutions, as given in Table 1, 
and all of them are pure discrete. The number of non-unit substitutions,
especially with trace $p=2$, is much larger, and on average each 
computation is much harder. These non-unit cases could mostly be 
checked only with a new implementation \cite{FGweb}
of the classical overlap algorithm in the GAP language 
\cite{GAP}, in which a special effort 
has been made to keep the memory requirements small. Also the non-unit 
substitutions turned out to be pure discrete. The results are summarized 
in Table~1.
\begin{table}[h]
\label{Parameter}
\begin{tabular}{|c|c|c||c|c|c|}
\hline
$p$ & $r$ & $q$ & $M_{\sigma}$ & $\sigma$ & Spectrum \\
\hline
0 & 1    & $1$          & 1     & 1 & pure discrete\\
1 & 1    & $0,1,2$      & 12    & 61 & pure discrete\\
1 & 2    & $2,3$        & 30    & 457 & pure discrete \\
2 & $-$1 & $1$          & 1+4   & 3+36 & pure discrete\\
2 & 1    & $-1,0,1,2,3$ & 22+46 & 1077+6199 & pure discrete\\
2 & 2    & $0,1,2,3,4$  & 72+97 & 10586+46348 & pure discrete\\
2 & 3    & $3,4,5$      & 59+93 & 65383+316532 & pure discrete\\
\hline
\end{tabular}
\vspace{5mm}
\caption{
The columns labelled $M_{\sigma}$ and $\sigma$ contain the number 
of inequivalent substitution matrices and substitutions, respectively.
In the trace~2 case, these numbers are split into the contributions
from matrices with diagonal $(2,0,0)$ and $(1,1,1)$, respectively.}
\end{table}

\end{proof}



In principle, the above method of listing all cubic Pisot substitutions
with a fixed trace can be used also for matrices $M_\sigma$ with ${\rm Tr}(M_{\sigma})= 3$. 
However, for this case there would be far too many substitutions to be checked, 
which currently seems beyond reach.

Experiments with an implementation of the balanced pair algorithm in GAP \cite{FGweb}, 
similar to that of the classical overlap algorithm \cite{SS, soltil}, suggests that the 
latter has an advantage for more complex substitutions. This is likely due
to the balanced pairs not only growing in number, but also in length,
which requires more memory to store them, but especially also more work
to compare them. The overlap types needed in the overlap algorithm,
on the other hand, grow only in number, and each of them requires
only a fixed (small) amount of memory.


\section{Substitution tilings in $\R^2$}



In this section, we check the pure discreteness of
self-affine tilings constructed by Kenyon \cite{Kenyon:96}.
Before explaining his construction, we start with general notations.

A {\em tile} in $\R^2$ is defined as a pair $T=(A,i)$ where $A=\supp(T)$
(the support of $T$) is a compact
set in $\R^2$ which is the closure of its interior, and
$i=l(T)\in \{1,\ldots,m\}$
is the color of $T$. 
We say that
a set $P$ of tiles is a {\em patch} if the number of tiles in $P$ is
finite and the tiles of $P$ have mutually disjoint
interiors. A {\em tiling} of $\R^2$ is a set $\Tk$ of tiles such that $\R^2 = \bigcup \{\supp(T) : T \in \Tk \}$ and distinct tiles have disjoint interiors.

Let $\Ak = \{T_1,\ldots,T_m\}$ be a finite set of tiles in
$\R^2$ such that $T_i=(A_i,i)$; we will call them {\em
prototiles}. Denote by ${\mathcal{P}}_{\Ak}$ the set of non-empty
patches. 
A substitution is a map $\Omega: \Ak \to {\mathcal{P}}_{\Ak}$ with a
$2 \times 2$ expansive matrix $Q$ if there exist finite sets $\Dk_{ij}\subset
\R^2$ for $i,j \le m$ such that
\begin{equation}
\Omega(T_j)=
\{u+T_i:\ u\in \Dk_{ij},\ i=1,\ldots,m\}
\label{subdiv}
\end{equation}
with
\begin{eqnarray} \label{tile-subdiv}
Q A_j = \bigcup_{i=1}^m (\Dk_{ij}+A_i) \ \ \ \mbox{for} \ j\le m.
\end{eqnarray}
Here all sets in the right-hand side must have disjoint interiors;
it is possible for some of the $\Dk_{ij}$ to be empty.

We say that $\Tk$ is a {\em substitution tiling} if $\Tk$ is a
tiling and $\Omega(\Tk) = \Tk$ with some substitution $\Omega$. 
We say that $\Tk$ has {\em finite local complexity} (FLC) 
if $\forall \ R > 0$, $\exists$ finitely many translational classes of patches whose support lies in some ball of radius $R$.
A tiling $\Tk$ is {\em repetitive} if for any compact set
$K \subset \R^2$, $\{t \in \R^2 : \Tk \cap K = (t + \Tk) \cap K\}$ is relatively dense.
A repetitive fixed point of a primitive substitution with FLC is called a {\em self-affine tiling}.
Let $\lambda>1$ be the Perron-Frobenius eigenvalue  of the substitution matrix $S$.
Let
$ D =\{\lambda_1,\ldots,\lambda_{d}\}$ be the set of (real and complex)
eigenvalues of $Q$. 
We say that $Q$ (or the substitution $\Omega$) fulfills the {\em Pisot family} property if, for every $\lambda \in
D$ and every Galois conjugate $\lambda'$ of $\lambda$, 
$\lambda' \not \in D$, then $|\lambda'| < 1$.

\subsection{Endomorphisms of free group}

Generalizing the idea of Dekking \cite{Dekking:82, Dekking:82_2}, 
Kenyon \cite{Kenyon:96} 
introduced a class of self-similar tilings generated by the endomorphisms of
a free group over three letters $a,b,c$:
\begin{eqnarray*}
\theta(a)&=&b\\
\theta(b)&=&c\\
\theta(c)&=&c^{p}a^{-r}b^{-q},
\end{eqnarray*}
where $p,q\ge 0, r\ge 1$ are integers for which
$x^3-p x^2+qx+r$ has exactly two roots $\lambda_1, \lambda_2$ with modulus
greater than one.\footnote{Kenyon \cite{Kenyon:96} studied the cases when
$\lambda_i$ are complex numbers.} 
The letters $a,b,c$ are identified with vectors 
$(1,1), (\lambda_1,\lambda_2)$, $(\lambda_1^2,\lambda_2^2)\in \R^2$ respectively
if $\lambda_1$, $\lambda_2$ are real numbers, and with
$1, \lambda_1, \lambda_1^2 \in \C$ if they are complex conjugates.
The endomorphism
$\theta$ acts naturally
on the boundary word 
\[ aba^{-1}b^{-1},aca^{-1}c^{-1}, bcb^{-1}c^{-1},\] 
which represent three fundamental parallelograms, and gives a substitution rule
on the parallelograms. 
The associated tile equations are
\begin{eqnarray*}
Q T_1&=&T_2\\
Q T_2&=&\left(\bigcup_{i=1}^q (T_2+ p c -i b -r a)\right) 
\cup 
\left(\bigcup_{i=1}^r (T_3+p c - i a) \right) \\
Q T_3&=&\left(\bigcup_{i=1}^r (T_1 +p c - i a) \right) \cup
\left(\bigcup_{i=1}^{p-1} (T_2 + i c) \right)
\end{eqnarray*}
where $Q$ is either $\begin{pmatrix} \lambda_1 & 0 \cr 0 & \lambda_2\end{pmatrix}$
or $\lambda_1$ depending on whether $\lambda_1$ is real or complex, respectively.

\begin{figure}
\begin{center}
\subfigure[Tiling]{
\includegraphics[clip, scale=0.7]{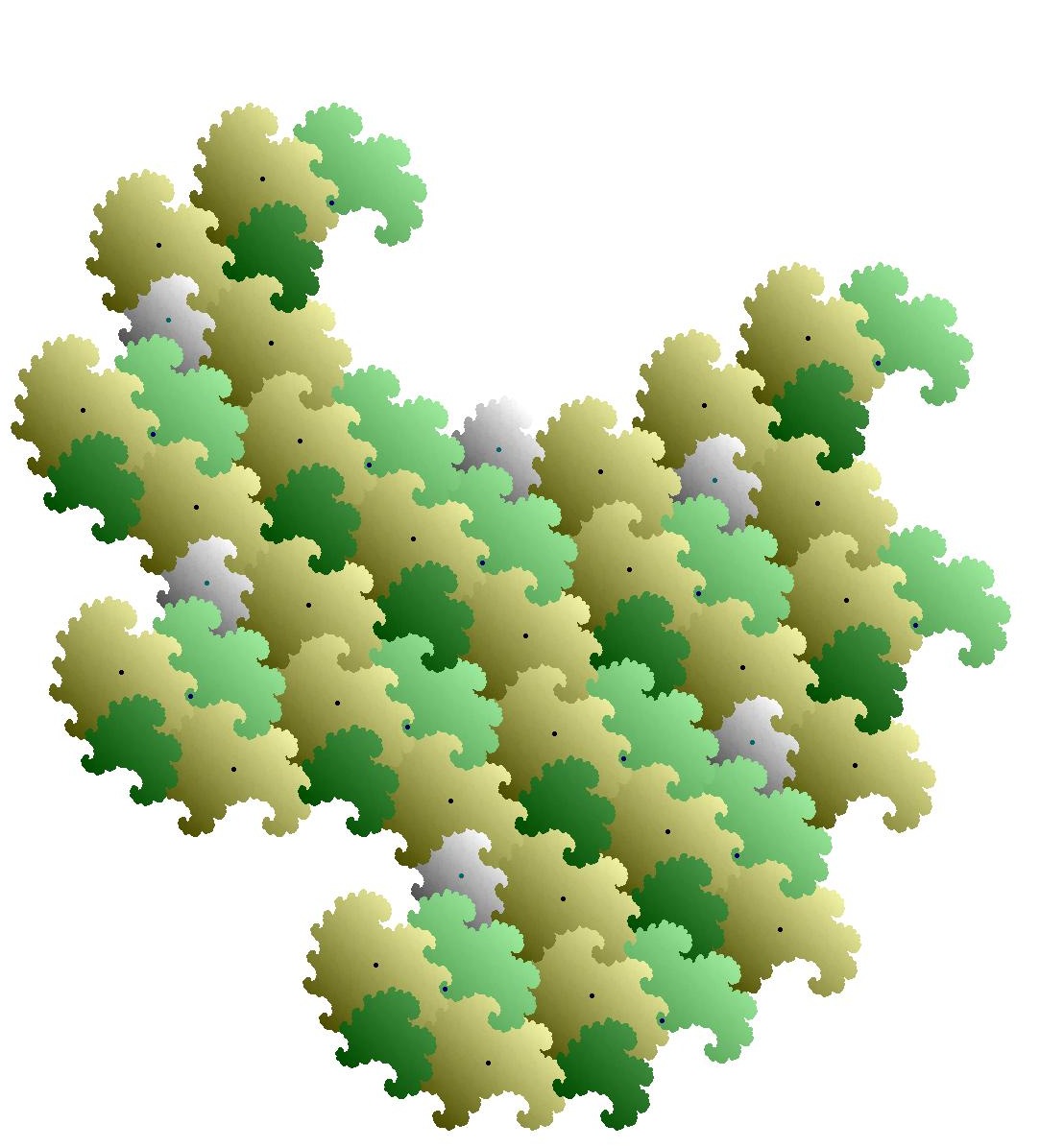}
}
\hspace{0.5cm}
\subfigure[Tiles]{
\includegraphics[clip, scale=0.7]{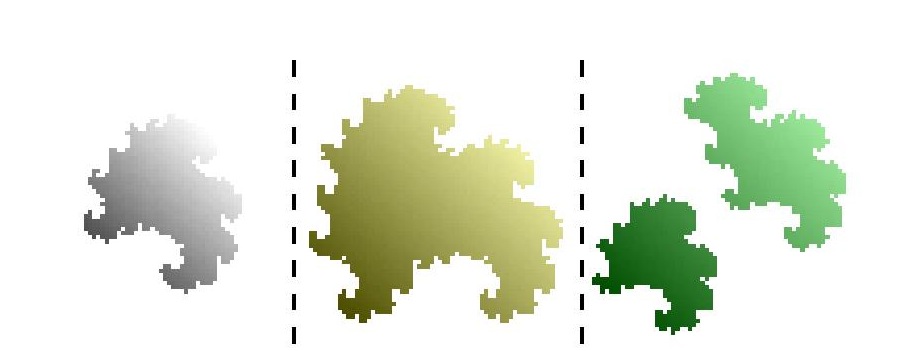}
}
\caption{Kenyon's tiling for $(p,q,r)=(2,1,1)$. 
The last tile is disconnected.} \label{Ex7_5}
\end{center}
\end{figure}

It is known \cite{LS, LS2} that if the expansion map $Q$ of a self-affine tiling in $\R^2$ is diagonalizable and the tiling has pure discrete dynamical spectrum, then $Q$ should fulfill the Pisot family property.
So we are interested in considering self-affine tilings with the Pisot family property on the expansion map $Q$. 
For this construction, 
we require that two roots of
the polynomial $x^3-p x^2+qx+r$ are greater than one, 
and one root is smaller than one in modulus.  In this case, we can note that there are no roots on the unit circle.
We adapt the Schur-Cohn criterion (see \cite[Theorem 2.1]{Akiyama-Gjini:04}
or \cite[Chap.10, Th.43, 1]{Marden:66}), which says that
the number of roots within the unit circle coincides with the
number of sign changes of the following sequence:
\[ 1, \Delta_1, \Delta_2, \Delta_3, \]
where 
\begin{eqnarray*}
\Delta_1 &=& - \left| \begin{array}{rr}
                      1 & r \\
                      r & 1 
                    \end{array}\right|= (r-1) (r+1), \\ 
\Delta_2 &=& \left| \begin{array}{rrrr}
                      1 & -p & r & 0  \\
                        0 &  1  &  q & r \\
                       r & q & 1 & 0 \\
                        0 & r & -p & 1 
                    \end{array}\right|
= -\left(p r+q-r^2+1\right) \left(p r+q+r^2-1\right), \\
\Delta_3 &=& -\left| \begin{array}{rrrrrr}
                      1 & -p & q & r & 0 & 0 \\
                       0  &  1  &  -p & q & r & 0 \\
                       0  &  0  & 1    & -p & q & r \\
                        r & q & -p & 1 & 0 & 0 \\
                       0  & r  & q & -p & 1 & 0 \\
                        0  & 0   &  r & q & -p & 1
                    \end{array}\right| \\
&=& (p-q-r-1)(p+q-r+1) \left(p r+q+r^2-1\right)^2,
\end{eqnarray*}
if all entries are non-zero. 
Notice that it cannot be that $p = q = 0$, otherwise 
all the roots of the polynomial have the same modulus.
Thus from $p, q \ge 0$ and $r \ge 1$,  $p r+q+r^2-1>0$.
So the signs come from 
$$
1, (r-1) (r+1), -(p r+q-r^2+1), (p-q-r-1)(p+q-r+1)
$$
The last term is not zero, since $\pm 1$ cannot be a root of $x^3-px^2+qx+r$.
When $r=1$, the second term vanishes and 
the third term is negative (because $p$ or $q$ is positive). 
Since the roots of the polynomial
are continuous with respect to the coefficients, 
the small perturbation of $r$ does not change the number of roots inside/outside
of the unit circle. 
Therefore we may assume that the second coefficients are non-zero
and use the Schur-Cohn criterion (c.f. \cite{Akiyama-Gjini:04}). 
As a result, the number of 
zeroes within the unit circle is
$1$ when $(p-q-r-1)(p+q-r+1)<0$ and it is $2$ when $(p-q-r-1)(p+q-r+1)>0$.
If $r>1$, then the second term is positive.  
In this case, 
applying the small perturbation argument when the third term vanishes,
the number of zeroes within the unit circle is
$1$ when $(p-q-r-1)(p+q-r+1)<0$ and $0$ or $2$ when $(p-q-r-1)(p+q-r+1)>0$.
Overall we obtain a unified 
conclusion that
the number of zeroes within the unit circle is
$1$ if and only if $(p-q-r-1)(p+q-r+1)<0$.
Our tiling exists when $|p-r|<q+1$.

For $\max \{p,q,r\}\le 3$, there are 34 cases:
$$
\{0, 1, 1\}, \{0, 2, 1\}, \{0, 2, 2\}, \{0, 3, 1\}, \{0, 3, 2\}, \{0, 3, 3\}, 
\{1, 0, 1\}, \{1, 1, 1\}, \{1, 1, 2\}, 
$$
$$
\{1, 2, 1\}, \{1, 2, 2\}, \{1, 2, 3\}, \{1, 3, 1\}, \{1, 3, 2\}, \{1, 3, 3\}, 
\{2, 0, 2\}, \{2, 1, 1\}, \{2, 1, 2\}, 
$$
$$
\{2, 1, 3\}, \{2, 2, 1\}, \{2, 2, 2\}, \{2, 2, 3\}, \{2, 3, 1\}, \{2, 3, 2\}, 
\{2, 3, 3\}, \{3, 0, 3\}, \{3, 1, 2\}, 
$$
$$
\{3, 1, 3\}, \{3, 2, 1\}, \{3, 2, 2\}, \{3, 2, 3\}, \{3, 3, 1\}, \{3, 3, 2\}, 
\{3, 3, 3\}
$$
We determined the spectral type of
the tiling dynamical systems except for the 
case $\{3,0,3\}$. 
All computed systems admit an overlap coincidence, and thus
have a pure discrete spectrum.
In the remaning case $\{3,0,3\}$, the tiles are very thin,
so that the number of initial overlaps seems to be
beyond the capability of our program.

\section{Examples with non pure discrete spectrum}
\label{Non}

In this section, we wish to give two intriguing examples 
of self-affine tilings 
whose dynamical spectrum is not pure discrete. Both tiling dynamics
are not weakly mixing and therefore there exist non-trivial eigenvalues 
of their translation actions. 

Bandt discovered a non-periodic tiling in \cite{Bandt:97}, whose setting
comes from crystallographic tiles, and called it the {\it fractal chair tiling}.
The tile satisfies the following set equation
$$
- I \omega \sqrt{3} A = A \cup (A+1) \cup (\omega A +\omega) 
$$
\noindent
where $\omega=(1+\sqrt{-3})/2$ is the 6-th root of unity. It is called 
$3$-rep-tile, because it is a non-overlapping
union of three similar
contracted copies, i.e., the associated iterated function system satisfies the open set
condition \cite{Bandt:97}.
\begin{figure}
\begin{center}
\includegraphics[clip, scale=1.0]{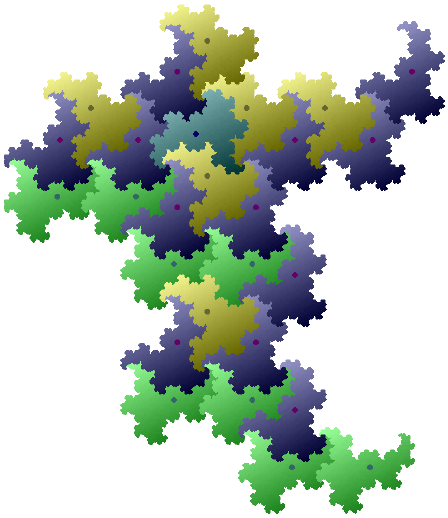}
\caption{Fractal chair tiling \label{Bandt}} 
\end{center}
\end{figure}
Applying the substitution rule, we obtain a tiling of the plane
by six tiles $T_i= \omega^i A\ (i=0,1,2,3,4,5)$ and their translates 
as given
in Figure \ref{Bandt}.
This tiling is {\it non-periodic}, i.e., the only translation
which sends the tiling exactly to itself is the zero vector.
We can confirm 
that this tiling is repetitive and the corresponding expansion map
satisfies the Pisot family property. 
So we can apply the potential overlap
algorithm from \cite{Akiyama-Lee:10}.
Our program shows that the fractal chair tiling is not purely discrete. 
The overlap graph with multiplicity contains 
a strongly connected component of spectral radius $3$, being equal to 
the spectral radius of the substitution matrix, 
which does not lead to 
a coincidence.  
In fact, the existence of such a component 
is shown in \cite{Akiyama-Lee:10} to be equivalent to having non-pure discrete spectrum.
Geometrically this means that there is 
a finite set $C$ of overlaps not containing any coincidence, 
which is mapped to itself under the substitution. 
We call such a set $C$ a {\it non-coincident component}.
We visualize in Figure \ref{Overlap} how each overlap in the
non-coincident component does not lead to a coincidence. Each figure represents an overlap
of two fractal chair tiles, and the support of the overlap is depicted in
thick color. The destinations of outgoing arrows show 
that we obtain three overlaps from each overlap by substitution.
\begin{figure}
\includegraphics[clip, scale=0.4]{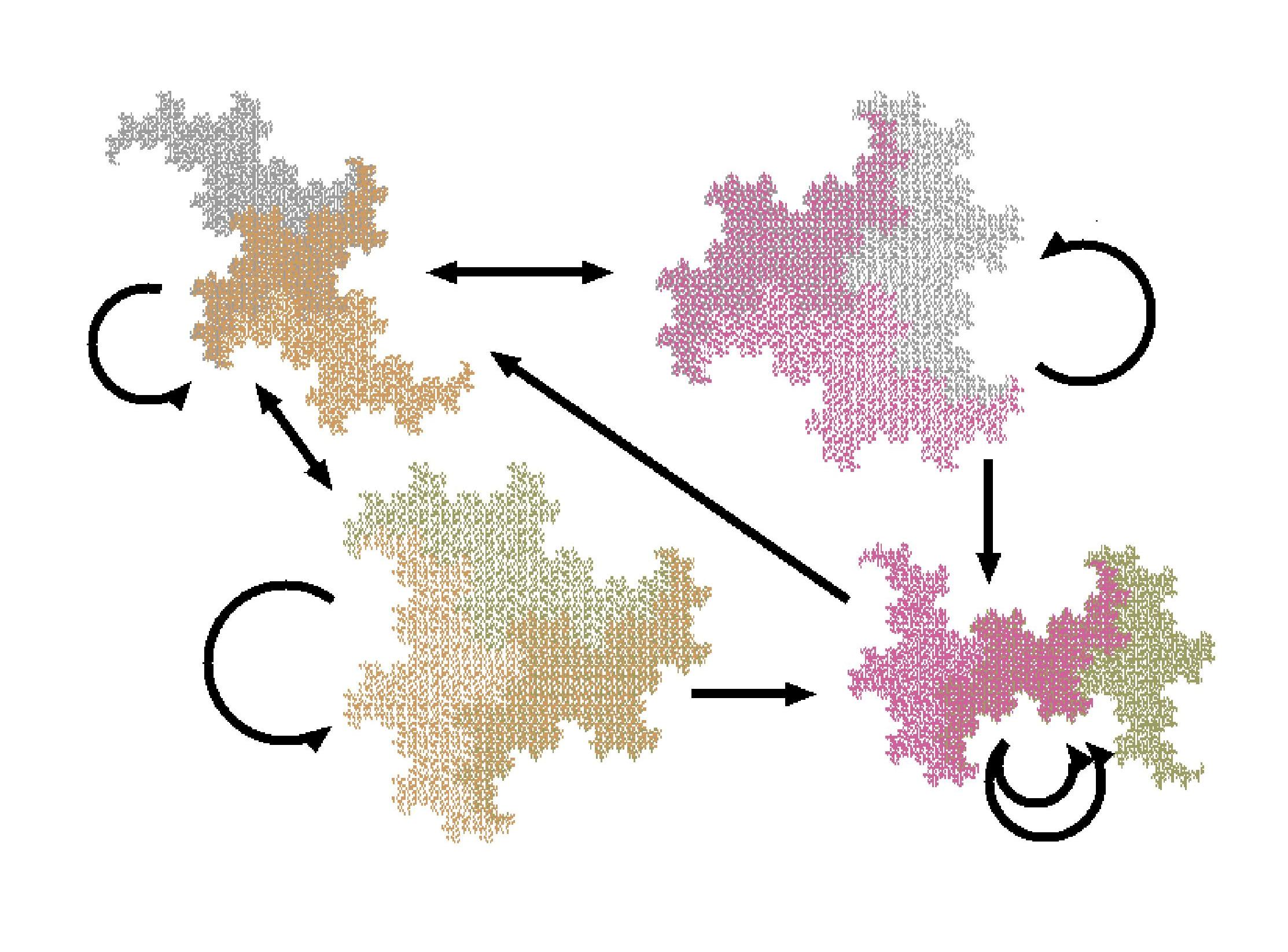}
\caption{Non-coincident component of fractal chair tiling \label{Overlap}} 
\end{figure}




As another example,
the following substitution 
\begin{eqnarray}
\label{4IET}
\nonumber
1&\rightarrow& 1241224,\\
2&\rightarrow& 1224,\\
\nonumber
3&\rightarrow& 1243334,\\
\nonumber
4&\rightarrow& 124334
\end{eqnarray}
with the characteristic polynomial $(x^2-3x+1)(x^2-6x+1)$ arose in the study of 
self-inducing 4 interval exchanges by P.~Arnoux, S.~Ito and M.~Furukado 
\cite{Ito_Kyoto:10}.  
As it is a Pisot substitution, 
the natural suspension tiling satisfies the Pisot family condition and
our potential overlap algorithm readily applies. 
It is of interest to check this non-weakly mixing system, 
as it is shown by \cite{AvilaForni:07} that 
a generic interval exchange transformation which is not a rotation
is weakly mixing.
Our algorithm
shows that its suspension tiling dynamics is not pure discrete.
Although there are several known reducible Pisot 
substitutions which are not pure discrete,
this example is noteworthy in the sense that the  
non-coincident component is much more intricate
than in other known examples. 
We describe it by another substitution
on the 12 letters:
\begin{eqnarray}
\label{Ov2}
\nonumber
a&\rightarrow& afdeEafdc, \\
\nonumber
b&\rightarrow& fBCFbcfdeE, \\
\nonumber
c&\rightarrow& afdc, \\
\nonumber
A&\rightarrow& AFDEeAFDC, \\
\nonumber
B&\rightarrow& FbcfBCFDEe, \\
C&\rightarrow& AFDC, \\
\nonumber
d&\rightarrow& FbcfdeE, \\
\nonumber
D&\rightarrow& fBCFDEe, \\
\nonumber
e&\rightarrow& afdeEe, \\
\nonumber
E&\rightarrow& AFDEeE, \\
\nonumber
f&\rightarrow& fBC, \\
\nonumber
F&\rightarrow& Fbc
\end{eqnarray}
The associated tile equation of this substitution
gives the non-coincident component of the suspension tiling corresponding to 
(\ref{4IET}). 
In other words, (\ref{Ov2}) is the exact analogue of Figure \ref{Overlap},
once we associate the intervals of canonical lengths given by 
the left eigenvector of the substitution matrix. 
Since the original tiling is a factor of this new
suspension tiling, the system is again not pure discrete.


\medskip

The above two substitution examples have reducible characteristic polynomials. 
So the assumption of the Pisot substitution conjecture in higher
dimensions of \cite{ABBLS} does not hold.

\section*{Acknowledgment}

\noindent
We are grateful to P.~Arnoux, Sh.~Ito and M.~Furukado
for permitting us to include the last example for the study of its spectrum. 
This work was supported by the National Research Foundation of Korea(NRF) Grant funded by the Korean Government(MSIP)(2014004168), the
Japanese Society for the Promotion of Science (JSPS), Grant in aid
21540012, and the German Research Foundation (DFG) through the 
CRC 701 \emph{Spectral Structures and Topological Methods in Mathematics}.
The first author is partially supported by the T\'AMOP-4.2.2.C-11
/1/KONV-2012-0001 project. (The project is implemented through the New 
Hungary Development Plan, co-financed by the European Social Fund and the 
European Regional Development Fund.)
The third author is grateful for the support of Korea Institute for Advanced Study(KIAS) for this research.

\vspace{6mm}

{\footnotesize
a: 
Institute of Mathematics, University of Tsukuba, 1-1-1 Tennodai, \\
 \hspace*{2.5em} Tsukuba, Ibaraki, Japan (zip:350-0006);
\email{akiyama@math.tsukuba.ac.jp}

\smallskip
b: Faculty of Mathematics, Bielefeld University, \\
\hspace*{2.5em} Postfach 100131, 33501 Bielefeld, Germany;
\email{gaehler@math.uni-bielefeld.de}

\smallskip
c: Dept. of Math. Edu., Catholic Kwandong University, 24, 579 Beon-gil, Beomil-ro, Gangneung,  \\  
\hspace*{2.5em} Gangwon-do, 210-701 Republic of Korea;
\email{jylee@cku.ac.kr} }

\end{document}